\newtheorem{theorem}{Theorem}[section]
\newtheorem{lemma}[theorem]{Lemma}
\newtheorem{corollary}[theorem]{Corollary}
\newtheorem{remark}[theorem]{Remark}
\newtheorem{defn}[theorem]{Definition}
\title{The level-set flow of the topologist's sine curve is smooth}
\author{Casey Lam\thanks{The first author was supported by the Marianna Polonsky Slocum Memorial Fund.}\and Joseph Lauer}
\date{January 11, 2016}
\begin{document}

\maketitle

\begin{abstract}
In this note we prove that the level-set flow of the topologist's sine curve is a smooth closed curve.  In~\cite{Lauer1} it was shown by the second author that under level-set flow, a locally-connected set in the plane evolves to be smooth, either as a curve or as a positive area region bounded by smooth curves.  Here we give the first example of a domain whose boundary is not locally-connected for which the level-set flow is instantaneously smooth.  Our methods also produce an example of a non path-connected set that instantly evolves into a smooth closed curve.    \end{abstract}


\section{Introduction}

The mean curvature flow of a compact hypersurface necessarily develops a singularity in finite time.  At that point the classical solution ceases to exist and several so-called weak solutions have been defined that allow the evolution to be extended.  One such example is level-set flow (see Section 2 for a definition), which was first investigated independently by Chen-Giga-Goto~\cite{CGG} and Evans-Spruck~\cite{ES}.  Level-set flow exists for \emph{any} compact initial data and agrees with smooth mean curvature flow whenever the latter is defined.

While the main use of level-set flow is certainly to analyze and flow through singularities it is nevertheless interesting to consider nonsmooth initial data which do no occur in this context.  In this paper we study the level-set flow of the standard topologist's sine curve, which we denote by $T_t$.  For any given $t>0$ elementary geometric arguments can be used to show that $T_t$ has Lebesgue measure zero and that the complement of $T_t$ has two connected components.  However, these arguments cannot rule out the possibility that the characteristic topological properties of the topologist's sine curve are preserved.  The main result of this paper is the following:

\begin{theorem}\label{main}
$T_t$ is a unique smooth curve shortening flow for $t>0$.
\end{theorem}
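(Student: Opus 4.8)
The plan is to realize $T$ as the boundary of a bounded planar domain $U$, to pinch the closed region $\Omega=\overline U$ between \emph{locally connected} compact sets to which the main theorem of \cite{Lauer1} applies, and then to pass to a limit. As a first step I would verify that the level-set flow of $T$ does not fatten and that, writing $\Omega_t$ for the level-set flow of $\Omega$, one has $T_t=\partial\Omega_t$ for $t>0$ (here the fact that $T$ has measure zero is used). This reduces the theorem to showing that $\partial\Omega_t$ is a smooth embedded closed curve for every $t>0$. The sole obstruction to quoting \cite{Lauer1} directly is that $\partial\Omega=T$ is not locally connected, and this failure is confined to an arbitrarily thin vertical slab: for small $\delta>0$ the portion of $T$ with $0\le x\le\delta$ lies in the rectangle $R_\delta=[-\delta,\delta]\times[-1-\delta,1+\delta]$, a convex set of area $O(\delta)$.

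For each small $\delta>0$ (avoiding a measure-zero set of exceptional values) I would form the compact set $\Omega_\delta^{+}$ by deleting the oscillatory cusp $\Omega\cap\{x<\delta\}$ and replacing it by the filled slab $R_\delta$, and $\Omega_\delta^{-}=\overline{\Omega\cap\{x\ge\delta\}}$. Away from $\{x=0\}$ the set $T$ is a finite union of smooth arcs, so the surgery at $x=\delta$ introduces only finitely many corners and both $\Omega_\delta^{\pm}$ are compact and \emph{locally connected}; moreover $\Omega_\delta^{-}\subseteq\Omega\subseteq\Omega_\delta^{+}$, the family $\Omega_\delta^{+}$ decreases to $\Omega$ and the family $\Omega_\delta^{-}$ increases to a dense subset of $\Omega$ as $\delta\downarrow 0$. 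By \cite{Lauer1}, $(\Omega_\delta^{\pm})_t$ is for every $t>0$ a compact region bounded by finitely many smooth curves moving by curve shortening flow, and by the avoidance principle $(\Omega_\delta^{-})_t\subseteq\Omega_t\subseteq(\Omega_\delta^{+})_t$ for all $t\ge 0$, with the inclusions monotone in $\delta$.

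It then remains to show that as $\delta\to 0$ the smooth flows $\partial(\Omega_\delta^{\pm})_t$ converge, in $C^\infty_{\mathrm{loc}}$ on $\{t>0\}$, to a single limit; that limit must be $T_t$, which is therefore a smooth curve shortening flow with $T_t\to T$ in the Hausdorff distance as $t\to 0^{+}$. Hausdorff convergence of the sandwiching regions to $\Omega_t$ follows from continuity of the level-set flow together with the observation that the ``difference'' sets $\Omega_\delta^{+}\setminus\Omega$ and $\Omega\setminus\Omega_\delta^{-}$ are regions trapped inside the convex slab $R_\delta$, and a region contained in a convex set of area $A$ has extinction time at most $A/2\pi$ (by the Gauss--Bonnet theorem the enclosed area of a convex curve decreases at the constant rate $2\pi$), so these difference sets are extinct by time $O(\delta)$. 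The crucial input is uniform regularity independent of $\delta$: here I would invoke the length estimate of \cite{Lauer1}, applied at scale $r\sim\sqrt t$ together with the scale-invariant bound that $T$ can be covered by $O(r^{-2})$ balls of radius $r$, to bound $\mathrm{Length}\,\partial(\Omega_\delta^{\pm})_t$ uniformly on compact subsets of $(0,\infty)$ and to see that the length contributed near $R_\delta$ tends to zero; combined with Huisken's monotonicity formula and White's local regularity theorem this gives uniform curvature, hence all-order, estimates on compact subsets of $(0,\infty)\times\mathbb R^2$ away from the collapsing slab, while the extinction estimate keeps the surviving curves a definite distance from $\{0\}\times[-1,1]$ for each fixed $t>0$. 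Interpolation upgrades Hausdorff to smooth convergence. Uniqueness is then a standard avoidance-principle argument: any smooth curve shortening flow $\{\Sigma_t\}$ with $\Sigma_t\to T$ as $t\to 0^{+}$ is squeezed between slightly shrunk copies of $\Omega_\delta^{-}$ and slightly enlarged copies of $\Omega_\delta^{+}$ (comparing at small initial times), and letting the initial time and then $\delta$ tend to zero forces $\Sigma_t=T_t$.

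The main obstacle is the uniform regularity near the collapsed cusp: a priori the regions $(\Omega_\delta^{+})_t$ could develop a spike there as $\delta\to 0$ while absorbing the infinitely many oscillations of $T$. Ruling this out is exactly what the new length estimate of \cite{Lauer1} is designed to do — it converts the crude covering bound for $T$ (a set of infinite length with non-locally-connected boundary) into a genuine length bound for the evolved curves, after which the standard regularity theory for weak flows applies and pins the limit down as the asserted smooth flow.
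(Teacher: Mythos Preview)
Your strategy is genuinely different from the paper's. You sandwich $\Omega$ between locally connected sets $\Omega_\delta^\pm$ obtained by surgery in a thin slab, invoke \cite{Lauer1} for each, and then try to pass to the limit using the length estimate of \cite{Lauer1} together with Huisken monotonicity and White's local regularity. The paper instead works with smooth Jordan approximations $\gamma_n$ \emph{inside and outside} $\Omega$ (no surgery, no application of \cite{Lauer1} to approximants) and proves a local length bound for $(\gamma_n)_t$ directly by an intersection-counting argument: away from $V$ one counts intersections with two families of static lines, while near $V$ the new idea is to count intersections with a family of thin \emph{grim reapers} that sweep through $V$ before time $t_0$. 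That gives $\sup_n\mathcal L((\gamma_n)_t\cap B_\epsilon(x))<\infty$, hence $\mathcal H^1(\partial T_t)<\infty$, hence $\partial T_t$ is locally connected, and only then is \cite{Lauer1} applied---to $T_t$ itself, once---so no limiting procedure in a parameter $\delta$ is needed. Uniqueness is obtained from the measure-zero lemma rather than from an avoidance squeeze.

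Your route could plausibly be completed, but two places are under-argued. First, the extinction bound for $\Omega_\delta^+\setminus\Omega$ and $\Omega\setminus\Omega_\delta^-$ controls the level-set flow of those \emph{difference sets}, not $(\Omega_\delta^+)_t\setminus\Omega_t$; you need instead the Hausdorff continuity of level-set flow (which you mention) or the constant-annulus-area argument the paper uses. Second, and more seriously, a uniform length bound does not by itself put you in the regime of White's local regularity: you must show the Gaussian density is close to $1$ uniformly in $\delta$ on compact subsets of $\{t>0\}$, which is precisely the delicate point near the collapsing slab. The paper's grim reaper device sidesteps this by producing a \emph{local} length bound (finitely many intersections with two transverse foliations of $B_\epsilon(x)$), which immediately yields finite $\mathcal H^1$ of the limit without invoking monotonicity or epsilon-regularity. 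What your approach would buy, if the regularity step is made rigorous, is a more black-box argument that leans entirely on \cite{Lauer1}; what the paper's approach buys is a self-contained, elementary estimate tailored to the geometry of $T$.
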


In fact what we show here is that $T_t$ instantly becomes locally-connected.  At that point we make use of the classification proved by the second author in~\cite{Lauer1}.  One restatement of that result is the following:

\begin{theorem}\label{Lauer}(\cite{Lauer1}, Theorem 1.1) Let $K \subset \mathds{R}^2$ be locally-connected, connected and compact.  Then for each~$t>0$, $\partial K_t$ is a (perhaps empty) finite union of smooth closed curves.
\end{theorem}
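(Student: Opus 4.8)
The plan is to exhibit the level-set flow of $K$ as a limit of smooth curve shortening flows and then read off the regularity of $\partial K_t$ from parabolic smoothing, the whole weight of the argument resting on a single length estimate that must survive a possibly non-rectifiable approximation. Throughout I will use two standard features of level-set flow: the comparison (avoidance) principle, which keeps initially disjoint or nested compact sets disjoint or nested under the flow, and the fact that a smooth embedded closed curve flows by curve shortening flow, instantly becoming analytic and agreeing with its own level-set flow on its maximal smooth interval.

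First I would build barriers. Since $K$ is compact, connected and locally connected it is a Peano continuum, hence uniformly locally connected. Smoothing the sublevel sets $\{x : \mathrm{dist}(x,K)<\varepsilon_i\}$ (which for almost every $\varepsilon_i$ have smooth boundary, a finite union of disjoint closed curves, by Sard) produces a decreasing family of smooth domains $\Omega_i \supset \Omega_{i+1}\supset K$ with $\bigcap_i\Omega_i=K$, and an analogous approximation of the complementary regions from inside. Because $\partial\Omega_i$ is disjoint from $K$, the avoidance principle keeps its smooth flow disjoint from $K_t$, so these flows serve as outer barriers; the inner approximations give inner barriers, and together they trap $\partial K_t$ for every $t>0$. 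It therefore suffices to control the barrier flows uniformly in $i$ and pass to the limit.

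The crux, and the main obstacle, is a length estimate for curve shortening flow that does not degenerate as $i\to\infty$. The naive monotonicity $L((\partial\Omega_i)_t)\le L(\partial\Omega_i)$ is useless precisely because $L(\partial\Omega_i)\to\infty$ when $\partial K$ is non-rectifiable, as for the topologist's sine curve. What I want instead is a bound of the form $L((\partial\Omega_i)_t)\le C(t,\mathrm{diam}\,K,\mathrm{Area}(\Omega_1))$, independent of $i$ and of the initial length. Dimensionally $L\sim\mathrm{Area}/\sqrt{t}$, which suggests controlling the length at time $t$ by the area swept out together with the intrinsic scale $\sqrt t$; I would establish it from Huisken's monotonicity formula combined with a localized clearing-out argument bounding the length of the flow inside each ball of radius $\sqrt t$ by a universal constant. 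Everything else is comparatively routine.

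Granting this stable length bound, the interior estimates for curve shortening flow bound $|\kappa|$ and all its derivatives at time $t$ in terms of $t$ and the length, giving uniform $C^\infty$ bounds on the barrier flows over every $[\delta,T]$. A subsequence then converges in $C^\infty_{\mathrm{loc}}$ on $(0,\infty)$ to a smooth limiting flow, which by uniqueness of the level-set flow must equal $\partial K_t$; local connectedness is exactly what forces the inner and outer limits to coincide, i.e.\ that $\partial K$ does not fatten, so $\partial K_t$ is a smooth embedded $1$-manifold. Finally, finiteness of the number of components follows from the length bound and the extinction behaviour of embedded curve shortening flow: the area enclosed by any smooth closed component decreases at the fixed rate $2\pi$, so a component present at time $t$ bounded at time $0$ a region of area at least $2\pi t$, and since the innermost such regions are disjoint and of finite total area only finitely many components can occur. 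This completes the argument.
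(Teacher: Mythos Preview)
This theorem is not proved in the present paper; it is quoted from \cite{Lauer1} and used as a black box, so there is no proof here to compare your attempt against. Your overall strategy --- reduce everything to a uniform-in-$n$ length estimate for smooth approximations and then extract a smooth limiting flow --- is indeed the backbone of the argument in \cite{Lauer1}, and you are right that the length estimate independent of the initial length is the decisive ingredient.

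That said, your sketch contains a genuine error. You assert that ``local connectedness is exactly what forces the inner and outer limits to coincide, i.e.\ that $\partial K$ does not fatten''. This is false: the figure-eight is compact, connected and locally connected, yet its level-set flow fattens instantly into a region with three smooth boundary components (the present paper recalls exactly this example in Section~\ref{sec_lsfdef}). The conclusion of the theorem is deliberately phrased to allow $\partial K_t$ to be a \emph{finite union} of closed curves precisely to accommodate such behaviour; one cannot expect the limits coming from different complementary components of $K$ to coincide, and the argument must instead show that \emph{each} such limit is separately a smooth curve shortening flow. A secondary gap is the passage from a uniform length bound to uniform curvature bounds: this is not an immediate consequence of standard interior estimates (which typically need a local graph representation or a prior curvature bound), and in \cite{Lauer1} further work is required at this step.
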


In Theorem~\ref{Lauer} there are essentially three possibilities: The first is that $K_t$ instantly evolves as a smooth closed curve, the second is that $K_t$ is a positive area set with smooth boundary and the third is that $K_t$ vanishes instantly.

The question of nonsmooth initial data for mean curvature flow has been studied extensively: Ecker and Huisken~\cite{EH1} first proved that Lipschitz entire graphs have a smooth evolution and later~\cite{EH2} showed that the same is true for curves that satisfy a uniform local Lipschitz condition.  In her thesis, Clutterbuck~\cite{Cl} proved the existence of a smooth evolution for bounded graphs which were merely continuous. In~\cite{Lauer1} the case of planar curves was studied.  There the first examples were given of sets with Hausdorff dimension $>$ 1 (the Koch snowflake for example) that evolve to be smooth under level-set flow.  More recently Hershkovits~\cite{Hersh} has given the first examples of this behaviour in arbitrary dimensions by studying $(\epsilon, R)$-Reifenberg sets for~$\epsilon$~sufficiently small.


\subsection{Overview}

In this section we give an outline of the proof of Theorem~\ref{main}.  As usual when studying nonsmooth initial data for a parabolic PDE the proof proceeds by establishing uniform estimates on a sequence of smooth approximations.

We begin by defining the initial data:

\begin{defn} [Topologist's sine curve] A topologist's sine curve $T$ is the union of the graph of $y=\sin(1/x)$ for $x\in (0,\beta)$, the line segment $V=\{0\}\times[-1,1]$, and a smooth arc (disjoint from the first two pieces except at its endpoints) connecting these two pieces.  See Figure 1.
\end{defn}

\begin{figure}\label{TSCpic}
    \centering
    \includegraphics[scale=0.18]{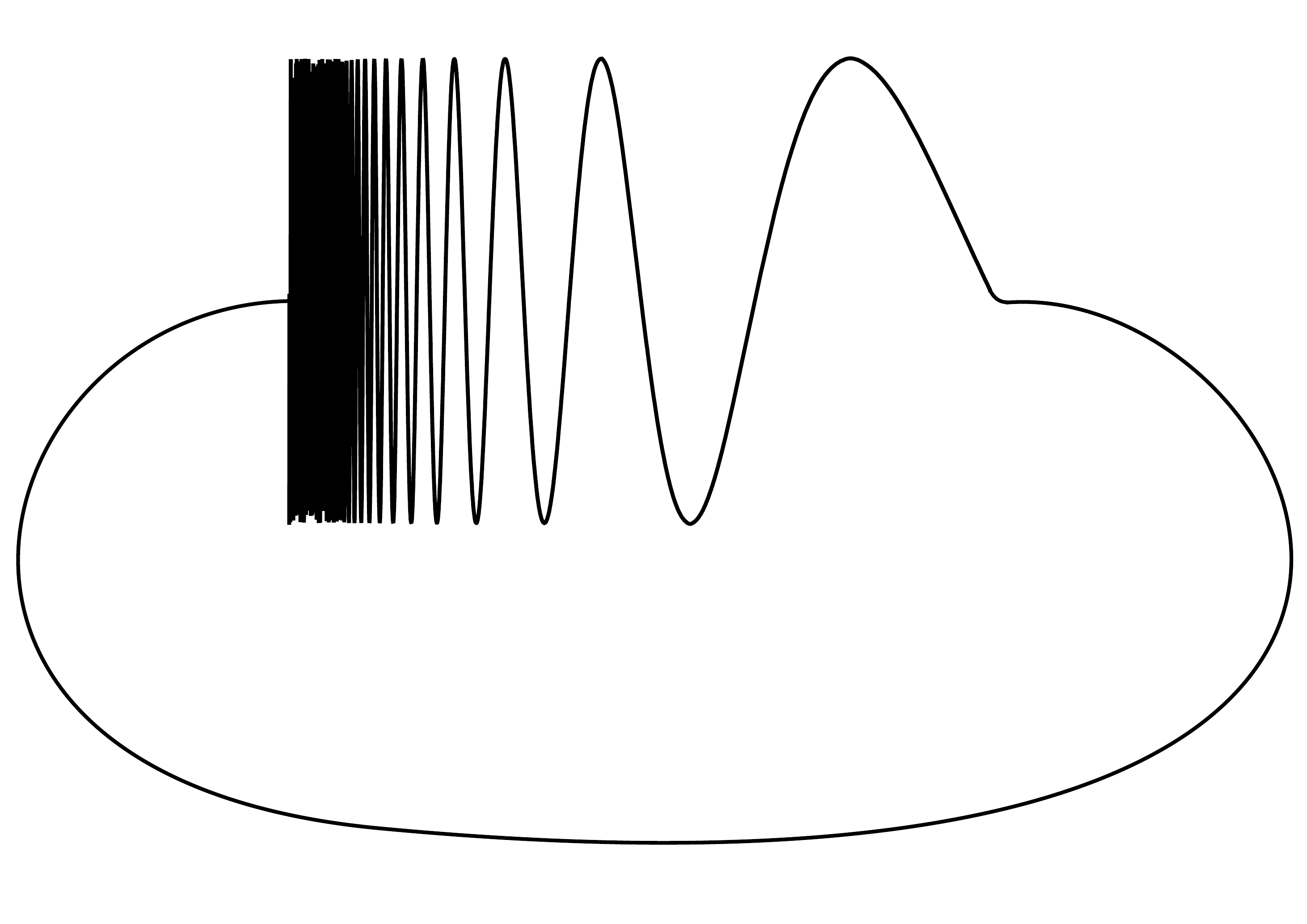}
    \caption{The topologist's sine curve.}
\end{figure}

The exact definition, i.e. how much of the graph $y=\sin(1/x)$ is taken and which arc is used to connect the end of it back to $V$, is not important.  Indeed, it will be clear from the argument that Theorem~\ref{main} is valid for any diffeomorphic image of $T$ as long as the compactifying segment remains linear. 

In Section~\ref{sec_lsfdef} we recall that the level-set flow of $T$ is determined by two sequences of smooth approximations, one contained in each of the two components of $\mathds{R}^2\setminus T$.  Let $\gamma_n$ be one such sequence. Since~$\gamma_n$ is chosen so as to Hausdorff converge to $T$ it follows that the lengths, denoted $\mathcal{L}(\gamma_n)$, are unbounded.  Indeed, this is even true locally for $x\in V$.  In that case for any $\epsilon>0$ 
$$
\sup_{n\in\mathds{N}} \{\mathcal{L}(\gamma_n \cap B_\epsilon (x))\}=\infty,
$$
where $B_\epsilon(x)=\{y\mid |x-y|<\epsilon\}$.  The main step in the proof of Theorem~\ref{main} is showing that the supremum above is finite if one evolves the approximations simultaneously for a short time.  Namely, we prove the following: 

\begin{theorem}\label{TSC}
For each $t > 0$ and $x \in \mathds{R}^2$, there exists $\epsilon = \epsilon(x, t)>0$ such that
$$
\sup_{n\in\mathds{N}}\mathcal{L}((\gamma_n)_t \cap B_\epsilon (x))<\infty.
$$
\end{theorem}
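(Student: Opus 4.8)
The plan is to obtain a uniform (in $n$) bound on the length of the evolved curves $(\gamma_n)_t$ inside a small ball by combining two ingredients: a global length bound coming from the fact that curve shortening decreases total length, and a local gradient-type estimate that controls how much length can concentrate near a point. The total length is problematic at $t=0$ because $\mathcal{L}(\gamma_n)\to\infty$, so the first reduction is to note that it suffices to prove the bound at a single small time $t_0>0$: once we know $\sup_n \mathcal{L}((\gamma_n)_{t_0}\cap B_\epsilon(x))<\infty$ and, crucially, that $\sup_n \mathcal{L}((\gamma_n)_{t_0})<\infty$, monotonicity of length under curve shortening flow gives the bound for all $t\ge t_0$, and a covering argument reduces the general $x$ to finitely many balls.

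The heart of the matter is the region near the vertical segment $V=\{0\}\times[-1,1]$, where the approximations $\gamma_n$ develop unbounded length as $n\to\infty$ by oscillating to track the graph of $\sin(1/x)$. Away from $V$ (i.e.\ on the part of $T$ that is a single smooth arc, including the compactifying segment and the graph $y=\sin(1/x)$ on any interval $x\in[\delta,\beta]$), the initial data is uniformly locally Lipschitz as a graph, so the Ecker--Huisken interior estimates for graphical curve shortening apply directly and yield a uniform bound on $\mathcal{L}((\gamma_n)_t\cap B_\epsilon(x))$ there. So I would localize attention to a neighborhood of $V$ and exploit the key structural feature emphasized in the statement of Theorem~\ref{main}: the compactifying segment is \emph{linear}, and near $V$ the curve $T$ itself is a union of the straight segment $V$ together with infinitely many nearly-vertical graphical strands accumulating onto it. The idea is to use $V$ (a static line, hence a curve shortening flow) as a barrier and to run a reflection/Sturmian argument: the number of times $(\gamma_n)_t$ crosses a fixed vertical line $x=c$ is nonincreasing in $t$ (intersection number with a line decreases under curve shortening), and likewise the number of intersections with any fixed line decreases. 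This lets one show that after any positive time the evolved curve, near $V$, is a graph over a segment of the $y$-axis (or a bounded number of such graphs), with the oscillations damped out; uniform locally Lipschitz graph estimates à la Ecker--Huisken then bound its length in a small ball independently of $n$.

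More concretely, the key steps in order would be: (1) reduce to a neighborhood of $V$ via a finite cover and the smooth/graphical estimates elsewhere; (2) set up the two sequences of barriers from Section~\ref{sec_lsfdef} so that $(\gamma_n)_t$ is trapped between $V$-translates and a shrinking region, giving a uniform $C^0$ bound on the location of $(\gamma_n)_t$ near $V$; (3) use Sturmian intersection-counting against the family of vertical lines $\{x=c\}$ and against lines through points of $V$ to show that for $t>0$ the intersection of $(\gamma_n)_t$ with a fixed small ball around a point of $V$ has a \emph{uniformly bounded} number of components, each a graph over the $y$-axis with uniformly bounded gradient (the gradient bound is where the linearity of $V$ enters: $V$ is an exact static solution, so a standard gradient estimate/angle comparison propagates from the boundary data); (4) convert the uniform Lipschitz-graph bound into a uniform length bound in $B_\epsilon(x)$; (5) assemble a global length bound $\sup_n\mathcal{L}((\gamma_n)_{t_0})<\infty$ from finitely many such local bounds and use length monotonicity to pass from $t_0$ to all $t>0$.

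The main obstacle I expect is step (3): controlling the oscillations of $\gamma_n$ near $V$ uniformly in $n$. The difficulty is that at $t=0$ the curves have arbitrarily many near-vertical strands with arbitrarily large (in fact unbounded) length, so any estimate that is not scale-invariant or that depends on the initial length will fail. The resolution must be genuinely parabolic: one needs that after time $t$ the flow has "seen" scale $\sqrt{t}$, below which the fine oscillations of $\sin(1/x)$ live, and on that scale the relevant comparison is with the static line $V$ rather than with the wild initial curve. Making the intersection-number/barrier argument quantitative enough to produce a bound on the number of graphical sheets that is independent of $n$ — rather than merely finite for each $n$ — is the crux. I would try to phrase it as: the number of points where $(\gamma_n)_t$ meets a fixed horizontal line $\{y=a\}$ with $|a|<1$ is bounded by the number of such intersections at time $0$ only after the oscillations have been absorbed, so instead one bounds it by intersections of $(\gamma_n)_t$ with a \emph{later-time} comparison curve (e.g.\ a slightly shrunk round arc or a translate of $V$) whose intersection number with $\gamma_n$ at time $0$ is already $n$-independent.
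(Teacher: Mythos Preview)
Your high-level strategy --- reduce to intersection-counting with solutions of curve shortening flow, then convert bounded intersection numbers into a length bound --- matches the paper's. You also correctly isolate the crux: near $V$ one needs a comparison flow whose intersection number with $\gamma_n$ at $t=0$ is already $n$-independent, and which at time $t_0$ foliates a neighborhood of $x\in V$ in a direction transverse to the vertical. The gap is that you do not identify such an object, and the candidates you name do not work.

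Concretely: horizontal lines $\{y=a\}$ fail because $|\gamma_n\cap\{y=a\}|\to\infty$ at $t=0$. A translate of $V$ is vertical, so it controls only the vertical-line intersection count (which the paper already has via the $\omega$-graph property) but gives no transverse foliation. A ``slightly shrunk round arc'' passing through $V$ will, at $t=0$, cross the graph of $\sin(1/x)$ an unbounded number of times as its radius grows, for the same reason horizontal lines do: on the interval where $-1<y<1$ the arc has bounded slope, while $\sin(1/x)$ oscillates infinitely often there. What the paper uses instead is a thin \emph{grim reaper} $u^\lambda$, placed initially above $V$. The point of Lemma~4.2 is that the grim reaper's sides are so steep (slope $\gtrsim e^{2c}$) on the strip $-1\le y\le 1$ that they outrun $|\Gamma'|\lesssim c^2$ there, forcing $|u^\lambda\cap\Gamma|=1$ regardless of how thin (hence how fast) the reaper is. Choosing $c$ large enough that $u^\lambda_{t_0}$ has already swept past $V$, one obtains a family $\{u^\lambda_{t_0}\}_\lambda$ that is $C^1$-close to horizontal on $B_\epsilon(x)$ and satisfies $|(\gamma_n)_{t_0}\cap u^\lambda_{t_0}|\le C$ uniformly in $n$. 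Combined with the uniform vertical-line bound, the elementary length estimate of Section~3 finishes the job; no Ecker--Huisken gradient estimate is needed (and none would apply, since the $\gamma_n$ are not uniformly locally Lipschitz near $V$ in any direction).

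For $x\notin V$ your appeal to Ecker--Huisken is heavier than necessary: the paper simply takes two lines through $x$ avoiding $V$, notes each meets $T$ (hence $\gamma_n$) a bounded number of times, and uses that static lines are curve shortening flows so the intersection count persists.
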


And since the $\epsilon$ in Theorem~\ref{TSC} does not depend on $n$ the following Corollary is immediate:

\begin{corollary}\label{boundapprox} For each $t>0$ 
$$
\sup_{n\in\mathds{N}}\mathcal{L}((\gamma_n)_t)<\infty.
$$
\end{corollary}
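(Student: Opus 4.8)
The plan is to deduce the global length bound by a compactness argument from the local bound in Theorem~\ref{TSC}. First, observe that the level-set flow is contained in a fixed compact region: by the avoidance principle, $(\gamma_n)_t$ stays inside any large ball containing $T$ (in fact it remains inside the convex hull of $T$), so there is a compact set $\Omega \subset \mathds{R}^2$, independent of $n$ and of $t \in [0,t_0]$, with $(\gamma_n)_t \subset \Omega$ for all $n$.

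Next, fix $t>0$. For each $x \in \Omega$, Theorem~\ref{TSC} supplies a radius $\epsilon = \epsilon(x,t)>0$ and a constant $C_x<\infty$ with $\sup_n \mathcal{L}((\gamma_n)_t \cap B_{\epsilon(x,t)}(x)) \le C_x$. The balls $\{B_{\epsilon(x,t)/2}(x)\}_{x \in \Omega}$ form an open cover of the compact set $\Omega$; extract a finite subcover $B_{\epsilon(x_1,t)/2}(x_1), \dots, B_{\epsilon(x_N,t)/2}(x_N)$. Since $(\gamma_n)_t \subset \Omega \subset \bigcup_{i=1}^N B_{\epsilon(x_i,t)/2}(x_i) \subset \bigcup_{i=1}^N B_{\epsilon(x_i,t)}(x_i)$, we get
$$
\mathcal{L}((\gamma_n)_t) \le \sum_{i=1}^N \mathcal{L}\big((\gamma_n)_t \cap B_{\epsilon(x_i,t)}(x_i)\big) \le \sum_{i=1}^N C_{x_i},
$$
and the right-hand side is a finite constant independent of $n$. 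Taking the supremum over $n$ finishes the proof.

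There is essentially no obstacle here: the only points to be careful about are that the flow stays in a fixed compact set (so that finitely many local balls suffice, uniformly in $n$) and that the local constants $C_{x_i}$ from Theorem~\ref{TSC} do not depend on $n$ — both of which are already built into the statements we are allowed to assume. The real work has been done in Theorem~\ref{TSC}; this corollary is just the passage from a local to a global estimate via compactness.
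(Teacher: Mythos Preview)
Your argument is correct and is exactly the compactness argument the paper has in mind when it calls the corollary ``immediate'' from the fact that $\epsilon$ in Theorem~\ref{TSC} is independent of $n$; you have simply written out the details (uniform containment in a compact set plus a finite subcover) that the paper omits. The use of half-radius balls in the cover is harmless but unnecessary---covering directly by the $B_{\epsilon(x_i,t)}(x_i)$ would suffice.
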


In Section 5 we show that Corollary~\ref{boundapprox} implies that $\mathcal{H}^1(\partial T_t)<\infty$ for $t>0$, where $\mathcal{H}^1$ denotes the Hausdorff 1-measure, and hence that $\partial T_t$ is locally-connected (See for example~\cite{F}).  By Theorem~\ref{Lauer} this implies that $\partial T_t$ is smooth.  At this point we have essentially proved the existence portion of Theorem~\ref{main} but have made no claim of uniqueness as it is \emph{a priori} possible the boundary components defined by the two sequences of approximations are distinct.  But Theorem~\ref{main} then follows from the observation, see Lemma~\ref{measure}, that $T_t$ has Lebesgue measure zero. 


\subsection{The proof of Theorem~\ref{TSC}} The proof of Theorem~\ref{TSC} requires two separate arguments.  For $x\notin V$ the conclusion of Theorem~\ref{TSC} holds at $t=0$ and a simple argument counting intersections with a grid of static transverse lines shows that the uniform finiteness persists. 

For $x\in V$ the argument is more involved.  In this case we instead count intersections with a translating solution to curve shortening flow, the so-called grim reaper.   We choose the initial data $u$ such that $u\cap V=\emptyset$ and yet $u_t$ passes through~$V$ before time~$t_0$. This can be done since by parabolic scaling one can construct a grim reaper which is arbitrarily thin, and hence arbitrarily fast. See Figure 2.  

In Lemma~\ref{intersect} we show that $|u\cap T|$, and hence $|u\cap\gamma_n|$ are bounded and use the fact that the number of intersections is non-increasing under curve shortening flow to conclude that $|(\gamma_n)_{t_0}\cap u_{t_0}|$ is also uniformly bounded.  We then choose $\epsilon>0$ small enough so that the subset of $u$ that passes through $B_\epsilon(x)$ is nearly horizontal.  At $t_0$ we have that $(\gamma_n)_{t_0}\cap B_\epsilon(x)$ has a bounded number of intersections with each leaf in two transverse foliations of $B_\epsilon(x)$.  The vertical foliation is exactly linear while the horizontal one is $\mathcal{C}^1$-close to linear.  The result then follows since the bi-Lipschitz constant needed to map the pair of foliations to the standard grid is well-controlled. 

\begin{figure}\label{grimreaper}
    \centering
    \includegraphics[scale=0.2]{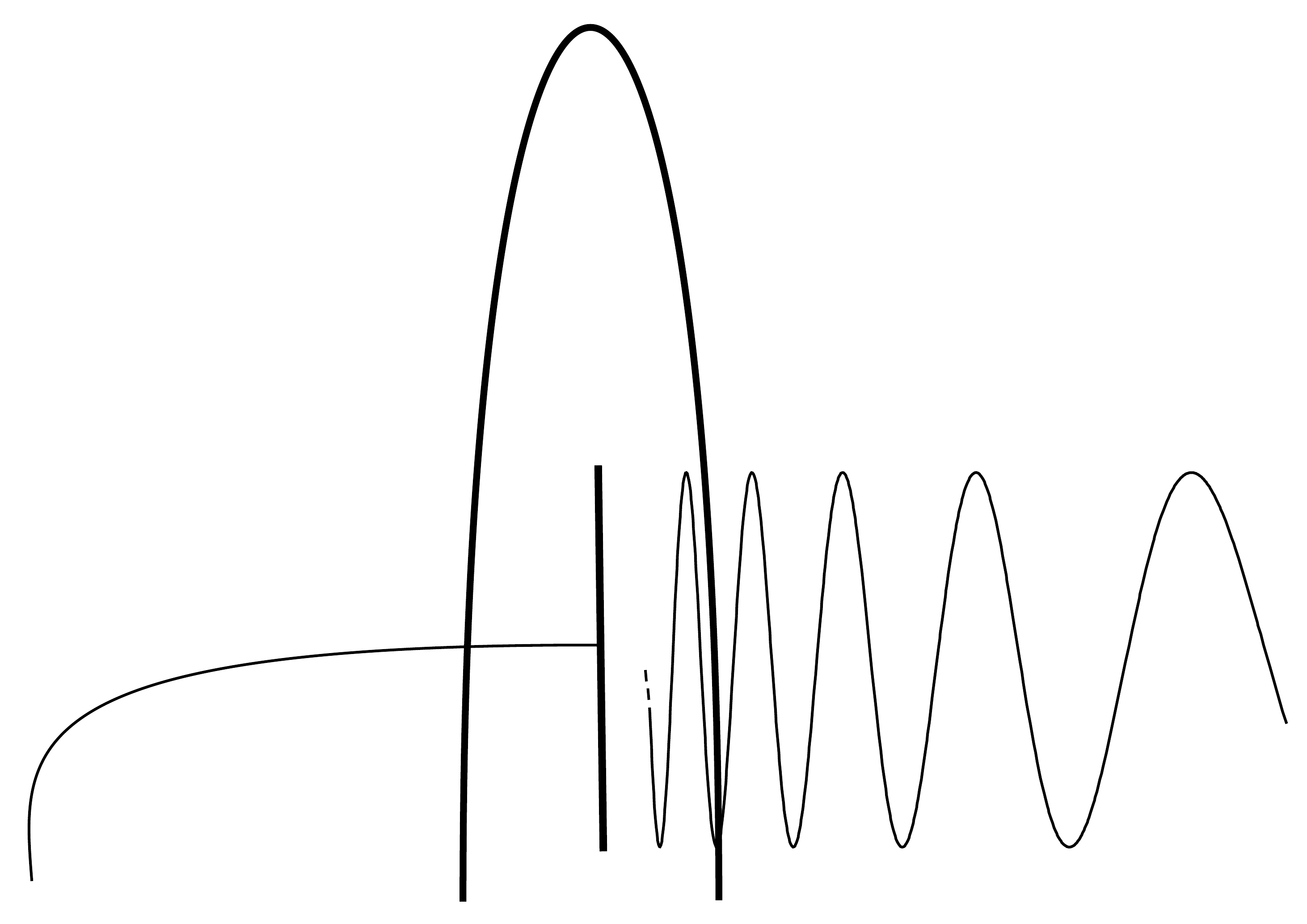}
    \caption{The grim reaper initially contains $V$ in its convex hull. The width, which is inversely proportional to its speed, is chosen so that the evolution passes through $V$ before $t=t_0$.}
\end{figure}


\subsection{Organization of Paper}
In Section~\ref{sec_lsfdef} we review the definition of level-set flow and specify the properties we require in an approximating sequence. In Section~\ref{sec_length} we compute a simple length estimate for transformations that will be used twice in Section~\ref{sec_locallyfinite}, where we prove Theorem~\ref{TSC}. In Section~\ref{sec_locallyconnected} we verify that Theorem~\ref{TSC} implies that the boundary components of  $T_t$ have finite $\mathcal{H}^1$-measure when $t>0$, which in turn implies that they are locally-connected.  In Section~\ref{sec_proof} we complete the proof of Theorem~\ref{main}.


\section{Level set flow and approximating sequences}\label{sec_lsfdef}

Mean curvature flow was first studied by Brakke~\cite{Br} in the context of geometric measure theory and later Huisken~\cite{H} developed the following classical formulation for smooth hypersurfaces:

\begin{defn} [Mean curvature flow] Let $\{M_t\}_{t\in(a,b)}$ be an evolving family of hypersurfaces in $\mathds{R}^{n+1}$. We say that $M_t$ is moving by mean curvature flow if it satisfies the nonlinear parabolic equation
$$ \frac{\partial}{\partial{t}}x = \vec{H}(x), \hspace{0.5cm}
x \in M_t , \hspace{0.5cm}
t \in (a,b),$$
where $\vec{H}(x)$ is the well-defined mean curvature vector at the point $x$.  
\end{defn}

As discussed in the Introduction the existence of finite-time singularities has led to several definitions of a weak solution, one of which level-set flow.  In this section we review some basic facts about level-set flow and the smooth approximations that determine the flow in the specific case of $\mathds{R}^2$. The geometric formulation in terms of weak-set flows given below is due to Ilmanen~\cite{I}.  For further background see~\cite{CGG}~\cite{ES}.

\begin{defn}\label{weaksetflow}\emph{(Weak-set flow)}
Let $K \subset \mathds{R}^{n+1}$ be compact, and let ${\{K_t\}}_{t \geq 0}$ be a 1-parameter family of compact sets with $K_0 = K$, such that the space-time track $\cup (K_t \times \{t\}) \subset \mathds{R}^2$ is closed. Then  ${\{K_t\}}_{t \geq 0}$ is a \emph{weak-set flow} for $K$ if for every smooth mean curvature flow $\Sigma_t$ defined on $[a,b] \subset [0, \infty]$ we have 
$$
K_a \cap \Sigma_a = \emptyset \rightarrow K_t \cap \Sigma_t = \emptyset
$$
for each $t \in [a,b]$.
\end{defn}

We are interested in one particular weak set flow: 

\begin{defn}\emph{(Level-set flow)
The \emph{level-set flow} of a compact set $K \subset \mathds{R}^{n+1}$ is the maximal weak-set flow. That is, a weak set flow $K_t$ such that if ${\hat{K}}_t$ is any other weak set flow, then ${\hat{K}}_t \subset K_t$ for all $t \geq 0$.}
\end{defn}

The existence of the level-set flow is established by taking the closure of the union of all weak set flows.  If the initial data is a smooth hypersurface then level-set flow agrees with mean curvature flow up to the first singular time.  On the other hand there is behaviour very different from smooth flows, the classic example being the figure eight which instantly \emph{fattens} into a smooth region with three boundary components.  Fattening in this context should be interpreted as non-uniqueness.

In $\mathds{R}^2$ the uniform existence of solutions to curve shortening flow, i.e. that the extinction time is proportional to the area bounded by the initial data, allows for a simple explicit definition.  The reason we do this here is that Lemma~\ref{measure} immediately implies the uniqueness portion of Theorem~\ref{main}, which is much more involved in higher dimensions, see~\cite{Hersh} for example. 

Let $\Omega\subset\mathds{R}^2$ be a bounded domain with topological boundary $K=\partial\Omega$.  Let $\alpha_n$, $\beta_n$ be sequences of smooth closed curves contained in $\Omega$ and $\mathds{R}^2\setminus\overline{\Omega}$ respectively such that 
$$
\lim_{n\to\infty}d_H(\alpha_n,K)\to 0,\:\:\:\:\:\lim_{n\to\infty}d_H(\beta_n,K)\to 0,
$$
where $d_H$ represents the Hausdorff distance on compact sets.  Then for each $t>0$ and $n\in\mathds{N}$ define $(A_n)_t$ to be the smooth annulus with boundary $\{(\alpha_n)_t,(\beta_n)_t\}$, where $(\alpha_n)_t$ and $(\beta_n)_t$ denote the evolution of $\alpha_n$ and $\beta_n$ by cuve shortening flow.  We then have the following:

\begin{lemma}\label{LSF} For all $t>0$ $$K_t=\bigcap_n (A_n)_t.$$
\end{lemma}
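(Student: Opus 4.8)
The plan is to show the two inclusions separately, using the defining properties of the level-set flow as the maximal weak-set flow together with the avoidance principle built into the definition of weak-set flow. For the inclusion $K_t \subseteq \bigcap_n (A_n)_t$, I would fix $n$ and argue that $\{(A_n)_t\}$ is itself (contains) a weak-set flow for $K$, or rather that $K_t$ must be disjoint from the complement of $(A_n)_t$. Concretely, write $\mathds{R}^2 \setminus \overline{\Omega}$-side: since $\beta_n$ is disjoint from $K$ and lies outside $\overline\Omega$, while $\alpha_n$ is disjoint from $K$ and lies inside $\Omega$, the curve shortening flows $(\alpha_n)_t$ and $(\beta_n)_t$ are smooth mean curvature flows that are initially disjoint from $K$. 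By Definition~\ref{weaksetflow} applied to the level-set flow $K_t$, we get $K_t \cap (\alpha_n)_t = \emptyset$ and $K_t \cap (\beta_n)_t = \emptyset$ for all $t$ before the extinction times. A connectedness/topological argument then pins down \emph{which} component of the complement of $(\alpha_n)_t \cup (\beta_n)_t$ the set $K_t$ lies in: since $K_t$ varies continuously (the space-time track is closed) and starts inside the annular region between $\alpha_n$ and $\beta_n$, it must remain in the closed annulus $(A_n)_t$. Intersecting over all $n$ gives $K_t \subseteq \bigcap_n (A_n)_t$.

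For the reverse inclusion $\bigcap_n (A_n)_t \subseteq K_t$, the idea is to show that $\bigcap_n (A_n)_t$, as a one-parameter family, is a weak-set flow for $K$, and then invoke maximality of the level-set flow. First one checks $\bigcap_n (A_n)_0 = K$: this follows from the Hausdorff convergence $d_H(\alpha_n,K)\to 0$, $d_H(\beta_n,K)\to 0$, since the annuli $A_n$ shrink down to $K$; some care is needed because the $A_n$ need not be nested, but one can pass to a subsequence or use that any point at positive distance from $K$ is eventually excluded from $A_n$. Then one verifies the avoidance property: if $\Sigma_t$ is a smooth mean curvature flow with $\Sigma_a \cap \bigcap_n (A_n)_a = \emptyset$, one needs $\Sigma_t \cap \bigcap_n (A_n)_t = \emptyset$. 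Here I would use compactness: $\Sigma_a$ disjoint from the nested-enough intersection means $\Sigma_a$ is disjoint from some $(A_N)_a$ (after shrinking slightly), and then the strong maximum principle / avoidance for the smooth flows $\Sigma_t$ against $(\alpha_N)_t$ and $(\beta_N)_t$ keeps $\Sigma_t$ out of $(A_N)_t \supseteq \bigcap_n (A_n)_t$. Finally, one must confirm the space-time track of $\bigcap_n (A_n)_t$ is closed, which follows from each $(A_n)_t$ having closed space-time track (smooth flow) and the intersection of closed sets being closed. Maximality of $K_t$ then yields $\bigcap_n (A_n)_t \subseteq K_t$.

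The main obstacle I anticipate is the bookkeeping around the fact that the approximating annuli $A_n$ are \emph{not} assumed nested, so $\bigcap_n (A_n)_t$ is not simply a decreasing limit. One must repeatedly use that Hausdorff convergence lets us, for any neighborhood of $K$, eventually trap $\alpha_n$ and $\beta_n$ inside it, hence trap $(A_n)_t$ near $K_t$ by the avoidance principle — effectively recovering a ``nested up to $\epsilon$'' structure. A secondary subtlety is handling extinction: after the (uniformly bounded, by the area bound mentioned before the lemma) extinction times of $(\alpha_n)_t$ or $(\beta_n)_t$, the ``annulus'' $(A_n)_t$ must be interpreted as the filled region or as empty, matching the behavior of $K_t$ which may have already developed positive area or vanished; one should state this convention explicitly. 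Modulo these points, everything reduces to the avoidance principle for smooth curve shortening flow and the definition of the level-set flow as the maximal weak-set flow, both available from the preceding material.
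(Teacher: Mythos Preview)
Your proposal is correct and follows exactly the paper's two-inclusion strategy: obtain $K_t\subset (A_n)_t$ from the weak-set flow avoidance property applied to the smooth boundary curves $(\alpha_n)_t,(\beta_n)_t$, and obtain $\bigcap_n (A_n)_t\subset K_t$ by checking that the intersection is itself a weak-set flow and invoking maximality. The paper's proof is much terser---it asserts both steps in one line each and does not discuss the nestedness, extinction-time, or connectedness issues you flag---so your write-up is essentially a careful expansion of the same argument.
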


\begin{proof} Since $K\subset \mathrm{int}((A_n)_0)$ the definition of weak-set flow implies $K_t\subset \mathrm{int}(A_n)_t$ for each $t>0$ and hence that $K_t\subset \bigcap (A_n)_t$.  On the other hand it is easily checked that $\cap_n (A_n)_t$ is itself a weak-set flow, which provides the opposite containment.
\end{proof}

\begin{remark}  The same construction can be used to determine the level-set flow of an arbitrary compact set.  In that case there is one approximating sequence in each component of $\mathds{R}^2\setminus K$.  In order to generalize Lemma~\ref{LSF} to higher dimensions one must first produce curvature bounds that imply that the evolution of such sequences exist on a uniform time interval. 
\end{remark}

Using Lemma~\ref{LSF} it is easy to prove that the evolution of the topologist's sine curve has Lebesguse measure zero, which in particular implies that $T_t$ does not fatten.  Let $m(\cdot)$ denote the two-dimensional Lebesgue measure.

\begin{lemma} \label{measure} $m(T_t)=0$ for all $t>0$.
\end{lemma}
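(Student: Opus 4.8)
The plan is to exploit Lemma~\ref{LSF} together with the fact that curve shortening flow shrinks areas at a controlled rate. First I would choose the approximating sequences $\alpha_n \subset \Omega$ and $\beta_n \subset \mathds{R}^2 \setminus \overline{\Omega}$ used to define $T_t = \partial (T)_t$, where $\Omega$ is the bounded component of $\mathds{R}^2 \setminus T$. Because $T$ has empty interior (it is a closed curve, or more precisely a compact set with $m(T) = 0$), the region $\Omega$ is a genuine open set; however, the key point is that $m(\overline{\Omega}) = m(\Omega)$ since $\partial\Omega = T$ has measure zero by the elementary geometric argument mentioned in the introduction. Consequently, for $n$ large, the smooth annulus $(A_n)_0$ bounded by $\alpha_n$ and $\beta_n$ can be taken to have area $m((A_n)_0) < \delta_n$ with $\delta_n \to 0$, since both $\alpha_n$ and $\beta_n$ Hausdorff-converge to $T$ and the region trapped between them shrinks down onto $T$.

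Next I would invoke the standard fact that under curve shortening flow the enclosed area of a smooth closed curve decreases at rate exactly $2\pi$ (per component, by Gauss--Bonnet / the first variation of area), so area is non-increasing along the flow of each $(\alpha_n)_t$, $(\beta_n)_t$, and hence the area of the annulus $(A_n)_t$ satisfies $m((A_n)_t) \le m((A_n)_0) < \delta_n$ for all $t$ in the common interval of existence. By Lemma~\ref{LSF}, $T_t = \bigcap_n (A_n)_t$, and since the sets $(A_n)_t$ are nested-up-to-measure with areas tending to zero, we get
$$
m(T_t) \le \inf_n m((A_n)_t) \le \inf_n \delta_n = 0.
$$
A small amount of care is needed to arrange the $(A_n)_t$ to be genuinely decreasing in $n$ (or at least to pass to a subsequence with $m((A_n)_t) \to 0$), but this follows from choosing the approximating curves successively closer to $T$ in Hausdorff distance, which forces the initial annuli to have areas tending to zero.

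The main obstacle, and the point that needs the most care, is justifying that the initial annuli $(A_n)_0$ can be chosen with area tending to zero. This is where one uses that $m(T) = 0$: given $\delta > 0$, cover $T$ by an open set $U$ with $m(U) < \delta$; then for $n$ large the Hausdorff convergence $d_H(\alpha_n, T), d_H(\beta_n, T) \to 0$ places both curves inside $U$, and one can choose $\alpha_n, \beta_n$ so that the annular region between them lies in $U$ as well, giving $m((A_n)_0) < \delta$. The only subtlety is ensuring the approximating curves on opposite sides of $T$ still bound an annulus contained in this neighborhood — this is automatic once $n$ is large because any path from $\alpha_n$ to $\beta_n$ within the thin region between them stays near $T$. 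Everything else is a direct application of the area-decay property of curve shortening flow and Lemma~\ref{LSF}.
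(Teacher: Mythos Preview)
Your approach is the same as the paper's: use $m(T)=0$ to get $m((A_n)_0)\to 0$, control the annulus area along the flow via the Gage--Hamilton area formula, and then apply Lemma~\ref{LSF}. One small correction: from ``both enclosed areas are non-increasing'' alone it does \emph{not} follow that the annulus area is non-increasing (a difference of two decreasing quantities can increase); the point---which you already have the ingredients for, since you state the rate is exactly $2\pi$ for each curve---is that the annulus area $m((A_n)_t)$ is in fact \emph{constant} in $t$, which is precisely how the paper argues.
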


\begin{proof} Since $m(T_0)=0$ it follows that $\lim_{n\to\infty} m(A_n)=0$.  Under curve shortening flow the area bounded by a smooth curve decreases at a constant rate~\cite{GH} and hence the area contained in an evolving annulus is constant.  Thus $\lim_{n\to\infty} m((A_n)_t)=0$ for each $t>0$ and the result follows by Lemma~\ref{LSF}.
\end{proof}

In general the sequences $(\alpha_n)_t$ and $(\beta_n)_t$ may have distinct Hausdorff limits.  Indeed this happens in the case of a positive area Jordan curve which instantly evolves into a smooth annulus~\cite{Lauer1}.  In the case of the topologist's sine curve we show that each sequence limits onto a smooth curve and so Lemma~\ref{measure} implies that the two limits coincide.  


\subsection{Approximations} For the remainder of the paper we work with a sequence satisfying the following properties: 

\begin{defn}\label{goodapprox} [Allowable approximations]\label{goodapprox} For the Topologist's sine curve $T=\partial\Omega$ we define a sequence of smooth closed curves $\gamma_n$ with the following properties:
\begin{enumerate}
\item $\gamma_n\subset\Omega$,
\item the regions bounded by $\gamma_n$ form an increasing sequence of sets,
\item $d_H(\gamma_n,T)\to 0$ as $n\to\infty$, 
\item for any $\epsilon>0$, $\gamma_n$ converges smoothly to $T\setminus B_\epsilon(V)$, and
\item there exists $\omega\in\mathds{Z}^+$ such that each $\gamma_n$ is an $\omega$-graph.
\end{enumerate}
\end{defn}

\begin{remark}\label{outside} As discussed in the paragraph preceding Lemma~\ref{LSF} it is also necessary to consider a similarly defined sequence in $\mathds{R}^2\setminus\overline{\Omega}$.  It is clear from the proof that Theorem~\ref{TSC} is valid for either type of approximating sequence. 
\end{remark}

The final property above asserts that $\gamma_n$ intersects each vertical line at most $\omega$ times.  For the standard picture in Figure 1 each $\gamma_n$ can be chosen to be a double graph, but in general $\omega$ will depend on the arc added to connect the origin to the $\sin(1/x)$ portion of $T$.  

The way in which we use the fourth property above is the following: 

\begin{lemma}\label{intbound} Let $\ell$ be a line in $\mathds{R}^2\setminus V$.  Then 
$$
|\gamma_n\cap\ell|\leq 2|T\cap\ell|
$$
for $n$ sufficiently large.
\end{lemma}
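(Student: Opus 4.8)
The plan is to exploit the fact that $\ell$ is disjoint from the vertical segment $V$, so that on a neighborhood of $\ell$ the curves $\gamma_n$ converge smoothly to $T$ (property 4 of Definition~\ref{goodapprox}). First I would break into two cases according to how $\ell$ meets $T$. If $\ell \cap T = \emptyset$, then since $\ell$ is a closed set disjoint from the compact set $T$ and $\gamma_n \to T$ in Hausdorff distance, for $n$ large we get $\gamma_n \cap \ell = \emptyset$ as well (more carefully: restrict to a large ball containing all the $\gamma_n$ and $T$, where $\ell$ becomes a compact segment at positive distance from $T$), and the inequality holds trivially with both sides zero.

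The substantive case is $\ell \cap T \neq \emptyset$. Here the key point is that $T \cap \ell$ is a finite set: away from $V$, the set $T$ is a finite union of embedded smooth arcs (the graph of $\sin(1/x)$ on a region bounded away from $x=0$, plus the connecting arc), and the line $\ell$ can meet this one-dimensional real-analytic/smooth set in only finitely many points unless $\ell$ contains an entire arc — a degenerate possibility I would rule out either by a transversality/genericity remark or by noting it doesn't affect the counting argument since one can perturb $\ell$ slightly. Say $|T \cap \ell| = k < \infty$, with intersection points $p_1,\dots,p_k$. Around each $p_i$ choose a small ball $B_i$, disjoint from $V$ and from each other, small enough that $T \cap B_i$ is a single smooth arc crossing $\ell$; by further shrinking I can assume $\ell$ meets this arc transversally at $p_i$, or at worst I bound the local intersection number of a nearby smooth curve with $\ell$ in terms of the local geometry. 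On $\overline{B_i}$ property 4 gives smooth (hence $C^1$) convergence $\gamma_n \to T$, so for $n$ large $\gamma_n \cap B_i$ is $C^1$-close to the single arc $T \cap B_i$ and therefore meets $\ell$ in at most... here is where I want a factor, not exactly $1$: a $C^1$-small perturbation of an arc transverse to $\ell$ still meets $\ell$ exactly once, but to be safe against non-transverse tangencies I claim at most $2|T\cap\ell\cap B_i|$ intersections, i.e. at most $2$ per point $p_i$ when the crossing is a simple transverse one and $T\cap\ell$ contributes $1$ there. Outside $\bigcup_i B_i$, the set $T$ is at positive distance from $\ell$ (within our large ambient ball), so Hausdorff convergence forces $\gamma_n$ to avoid $\ell$ there for $n$ large. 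Summing over $i$ yields $|\gamma_n \cap \ell| \le 2\sum_i |T\cap\ell\cap B_i| = 2|T\cap\ell|$ for $n$ sufficiently large.

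The main obstacle is handling tangential (non-transverse) intersections of $\ell$ with $T$, and making precise the passage from ``$C^1$-close curves'' to ``controlled intersection number with a line.'' If $\ell$ is tangent to an arc of $T$ at $p_i$, a nearby smooth curve could in principle cross $\ell$ twice near $p_i$ rather than zero or once — this is exactly why the factor $2$ appears in the statement rather than an equality. I would address this by a local graphical argument: near $p_i$, write $T$ as a graph over $\ell$ (possible after rotating coordinates so $\ell$ is the $x$-axis), so $T \cap B_i$ is $y = f(x)$ with $f(x_i) = 0$; then $\gamma_n \cap B_i$ is $y = f_n(x)$ with $f_n \to f$ in $C^1$, and $|\gamma_n \cap \ell \cap B_i|$ is the number of zeros of $f_n$ in the relevant interval. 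If $f$ has a simple zero at $x_i$ then $f_n$ has exactly one nearby zero; a zero of $f$ of higher order splits into at most — well, here one needs that $\gamma_n$ doesn't oscillate, which is where the smooth (not merely $C^0$ or $C^1$) convergence and the fact that these are honest curve-shortening-flow-type approximations could be invoked, but for the purposes of this lemma the crude bound ``each point of $T \cap \ell$ contributes at most $2$ to $\gamma_n \cap \ell$'' suffices and follows from choosing $B_i$ small enough that the arc $T \cap B_i$ is itself nearly linear, so that $\gamma_n \cap B_i$, being $C^1$-close, is monotone as a graph over $\ell$ and hence meets $\ell$ at most once — in which case one even gets the inequality with $|T\cap\ell|$ in place of $2|T\cap\ell|$, and the factor $2$ is a safe margin that also absorbs the finitely many $n$ before the estimate kicks in at each $B_i$, handled by taking the max over $i$.
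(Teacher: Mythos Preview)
Your approach is the same as the paper's: the paper offers only a one-sentence justification, namely that near a tangential point of $(T\setminus V)\cap\ell$ the approximation $\gamma_n$ will eventually meet $\ell$ either $0$, $1$, or $2$ times, and this is precisely the localization-plus-smooth-convergence picture you develop in your first two paragraphs.

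There is, however, a genuine error in your final paragraph. You argue that by shrinking $B_i$ so that the arc $T\cap B_i$ is ``nearly linear,'' the $C^1$-close curve $\gamma_n\cap B_i$ becomes monotone as a graph over $\ell$, and hence meets $\ell$ at most once. This is exactly backwards at a tangential intersection: writing $T\cap B_i$ as $y=f(x)$ over $\ell$, tangency means $f'(x_i)=0$, so ``nearly linear'' here means $f'$ is uniformly close to $0$ on $B_i$. A function $f_n$ that is $C^1$-close to such an $f$ has $f_n'$ close to $0$ as well, which gives no sign information on $f_n'$ and hence no monotonicity. The model case $f(x)=x^2$, $f_n(x)=x^2-1/n$ already shows two zeros appearing. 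So your attempt to improve the constant to $1$ fails, and the factor $2$ in the statement is genuinely needed for tangencies, exactly as the paper says.

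A secondary point you brush against but do not resolve: for a tangency of order $k\ge 3$ (say $f(x)=x^k$), a $C^1$-perturbation can have up to $k$ zeros nearby, so the bound ``at most $2$'' requires either that the tangencies of $T$ with $\ell$ are all simple, or that one uses the full smooth convergence in property~(4) (a $C^k$-close perturbation of a function with an isolated zero of order $k$ has at most $k$ zeros nearby). The paper does not spell this out either; for the application in the paper one only needs some uniform finite bound, and in any case the lines used there can be chosen transverse to $T$.
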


The two appears on the right-hand side due to the fact that $\gamma_n$ will eventually intersect~$\ell$ either 0,1 or 2 times near a tangential intersection in $(T\setminus V)\cap\ell$. 

\begin{remark} In general all conclusions regarding the sequence $\gamma_n$ hold only for sufficiently large $n$.  Throughout the rest of the paper we assume that the sequence has been appropriately modified and ignore this point.
\end{remark}


\section{The length of a curve with bounded intersections}\label{sec_length}

In this section we establish the following elementary estimate which is used in two separate instances in Section~\ref{sec_locallyfinite}.  Let $\mathcal{B}_\epsilon=\{x\mid|x|<\epsilon\}\subset\mathds{R}^2$.      

\begin{theorem}\label{jacobian}
 Given $\epsilon,M>0$ and $d>1$ suppose that $f:\mathcal{B}_{\epsilon}\to\mathds{R}^2$ fixes the origin, is invertible, differentiable and that 
 $$
 d^{-1}<\|\mathrm{Jac}(f)\|_2< d.
 $$  
 If $\gamma$ is the (perhaps disjoint) restriction of a smooth curve to $\mathcal{B}_\epsilon$ and $|\ell\cap f(\gamma)|\leq M$ for each horizontal or vertical line $\ell$, then 
$$
\mathcal{L}(\gamma)\leq 4Md^2\epsilon
$$
\end{theorem}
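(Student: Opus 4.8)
The plan is to estimate the length of $\gamma$ by comparing it with the length of $f(\gamma)$, which we can control via the intersection hypothesis, and then transferring back through the Jacobian bound. First I would bound $\mathcal{L}(f(\gamma))$: since $f(\gamma)$ lies in $f(\mathcal{B}_\epsilon)$, a bounded set whose diameter is at most $2d\epsilon$ (because $f$ fixes the origin and has Jacobian norm bounded by $d$, so $|f(x)|\le d|x|< d\epsilon$), and since every horizontal and vertical line meets $f(\gamma)$ in at most $M$ points, an integral-geometric / Cauchy–Crofton type argument bounds the total variation of each coordinate function of $f(\gamma)$. Concretely, writing $f(\gamma)(s)=(X(s),Y(s))$ in arclength, the number of solutions of $X(s)=c$ for fixed $c$ is at most $M$, so by the coarea/Banach indicatrix formula the total variation of $X$ is $\int_{\mathbb{R}} \#\{s: X(s)=c\}\,dc \le M\cdot(2d\epsilon)$, and similarly for $Y$. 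Hence $\mathcal{L}(f(\gamma))\le \int (|X'|+|Y'|) \le 4Md\epsilon$, using $|f(\gamma)'|\le |X'|+|Y'|$.

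Next I would transfer this bound back to $\gamma$. Since $f$ is invertible and differentiable with $\|\mathrm{Jac}(f)\|_2 > d^{-1}$, the inverse $g=f^{-1}$ satisfies a Lipschitz-type bound: $\|\mathrm{Jac}(g)\|_2 < d$ (the operator norm of the inverse of a matrix whose operator norm is $>d^{-1}$ is $<d$, once one checks invertibility — which is given). Then for $\gamma = g(f(\gamma))$ we get $\mathcal{L}(\gamma)=\mathcal{L}(g(f(\gamma))) \le d\cdot\mathcal{L}(f(\gamma)) \le d\cdot 4Md\epsilon = 4Md^2\epsilon$, which is exactly the claimed inequality.

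The main obstacle I anticipate is making the Banach-indicatrix step fully rigorous for a curve that is only the "(perhaps disjoint) restriction of a smooth curve to $\mathcal{B}_\epsilon$" — one must be careful that $\gamma$ may consist of several arcs, that it is rectifiable so the coarea formula applies, and that "$\|\mathrm{Jac}(f)\|_2$" denoting the spectral/operator norm gives the correct pointwise Lipschitz control $|f(x)-f(y)|\le d|x-y|$ along curves (via integrating the derivative along a path, using convexity of $\mathcal{B}_\epsilon$). A secondary subtlety is the factor bookkeeping: the bound $|f(\gamma)'|\le|X'|+|Y'|$ versus $\sqrt{(X')^2+(Y')^2}$ costs at most a factor that is already absorbed into the constant $4$, and the diameter bound $2d\epsilon$ could even be replaced by $d\epsilon$ for points measured from the origin, but keeping the cruder estimates is enough to land at $4Md^2\epsilon$. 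None of these is deep; the proof is genuinely elementary once the indicatrix identity is invoked.
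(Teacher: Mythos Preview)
Your approach is essentially the paper's: show $f(\mathcal{B}_\epsilon)\subset\mathcal{B}_{d\epsilon}$, bound $\mathcal{L}(f(\gamma))\le 4Md\epsilon$ from the intersection hypothesis, and pull back through $f^{-1}$ to pick up the second factor of $d$. The paper packages the middle step as a separate lemma proved by cutting $f(\gamma)$ into monotone arcs and summing their horizontal and vertical extents; your Banach--indicatrix/coarea argument is the standard continuous reformulation of exactly that computation, so there is no real difference in method.

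One genuine slip to fix: your parenthetical ``the operator norm of the inverse of a matrix whose operator norm is $>d^{-1}$ is $<d$'' is false. The operator norm $\|A\|_2$ is the \emph{largest} singular value $\sigma_{\max}(A)$, while $\|A^{-1}\|_2=1/\sigma_{\min}(A)$; a lower bound on $\sigma_{\max}$ says nothing about $\sigma_{\min}$ (e.g.\ $A=\mathrm{diag}(1,10^{-3})$). The hypothesis $d^{-1}<\|\mathrm{Jac}(f)\|_2<d$ has to be read as a two-sided (bi-Lipschitz) bound $d^{-1}|v|\le |Df\cdot v|\le d|v|$, which is indeed how the paper applies the theorem in both cases (it checks $\|A\|_2$ and $\|A^{-1}\|_2$ separately, or uses a map $\mathcal{C}^1$-close to the identity). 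With that reading your pull-back step $\mathcal{L}(\gamma)\le d\,\mathcal{L}(f(\gamma))$ is correct; just don't justify it by the false matrix claim.
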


\begin{remark}
Here $\|\mathrm{Jac}(f)\|_2$ denotes the (Euclidean) operator norm of the Jacobian of $f$.
\end{remark}

Theorem~\ref{jacobian} follows easily from the following Lemma which gives a bound on the length of a curve based on the maximum number of times it intersects a horizontal or vertical line: 

\begin{lemma}\label{straight}
Let $\epsilon>0$ and $\gamma$ be a differentiable curve in $\mathds{R}^2$ that intersects each vertical and horizontal line in $B=(-\epsilon,\epsilon)^2$ at most $M$ times. Then $$\mathcal{L}(\gamma \cap B) \leq 4M\epsilon.$$
\end{lemma}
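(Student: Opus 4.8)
The plan is to reduce the length of $\gamma\cap B$ to the total variation of its two coordinate functions and then invoke the Banach indicatrix formula (equivalently, the one‑dimensional coarea formula). Parametrize $\gamma\cap B$ by arclength as $s\mapsto(x(s),y(s))$, $s\in I$, where $I\subset\mathds{R}$ is a (possibly countable) union of intervals, since $\gamma\cap B$ need not be connected. Using $\sqrt{a^2+b^2}\le|a|+|b|$ pointwise, we get
$$
\mathcal{L}(\gamma\cap B)=\int_I\sqrt{x'(s)^2+y'(s)^2}\,ds\le\int_I|x'(s)|\,ds+\int_I|y'(s)|\,ds.
$$

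Next I would bound each term on the right separately. By the Banach indicatrix theorem (valid for $C^1$, indeed absolutely continuous, functions) applied to $x\colon I\to\mathds{R}$,
$$
\int_I|x'(s)|\,ds=\int_{\mathds{R}}\#\{s\in I:\ x(s)=a\}\,da.
$$
If $a\notin(-\epsilon,\epsilon)$ then $\{s\in I:\ x(s)=a\}=\emptyset$, since every point of $\gamma\cap B$ has first coordinate in $(-\epsilon,\epsilon)$; and if $a\in(-\epsilon,\epsilon)$ then $\{s\in I:\ x(s)=a\}$ is precisely the set of intersection points of $\gamma$ with the vertical line $\{x=a\}$ lying in $B$, which by hypothesis has at most $M$ elements. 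Hence the integrand is bounded by $M\cdot\mathds{1}_{(-\epsilon,\epsilon)}$ and $\int_I|x'(s)|\,ds\le 2\epsilon M$. The identical argument with horizontal lines gives $\int_I|y'(s)|\,ds\le 2\epsilon M$, and combining with the first display yields $\mathcal{L}(\gamma\cap B)\le 4M\epsilon$.

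The only points needing care are bookkeeping ones, and none of them is a genuine obstacle: that $\gamma\cap B$ may be a countable union of arcs (harmless — either sum the estimate over components or regard $I$ simply as a measurable subset of $\mathds{R}$ on which $x,y$ are differentiable); that the indicatrix identity applies under the stated regularity (it does once $x'$ is integrable, which is exactly what finiteness of the length we are estimating amounts to, and the multiplicity function is finite a.e.\ by hypothesis so the right‑hand side is meaningful); and a degenerate case in which $x(s)$ is locally constant, which contributes nothing to $\int_I|x'|$ and hence does not affect the bound regardless of how one chooses to ``count'' such an intersection. The substance of the lemma is the integral‑geometric identity in the second display; everything else is routine.
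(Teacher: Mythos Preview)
Your proof is correct. Both arguments rest on the same underlying inequality $\sqrt{a^2+b^2}\le|a|+|b|$, reducing the length to the sum of the total variations of the two coordinate functions, and then bounding each by $2M\epsilon$. Where you diverge from the paper is in how that last bound is obtained. The paper proceeds combinatorially: it partitions $\gamma\cap B$ into maximal monotone arcs, bounds the length of each arc by the sum of the lengths of its coordinate projections, and then invokes a separate elementary lemma (their Lemma~\ref{straight,2}) stating that a family of subintervals of $[0,2\epsilon]$ in which no point is covered more than $M$ times has total length at most $2M\epsilon$. You bypass both the monotone decomposition and the auxiliary interval lemma by applying the Banach indicatrix formula directly, which is exactly the continuous incarnation of that lemma. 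Your route is shorter and identifies the statement as a special case of standard integral-geometric machinery; the paper's route is more elementary and entirely self-contained, requiring nothing beyond calculus and a pigeonhole-type counting argument.
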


\begin{proof}[Proof of Theorem~\ref{jacobian}]
Observe that $\|\mathrm{Jac}(f)\|_2 < d$ and the fact that $f(0)=0$ imply that
$$
f(\mathcal{B}_\epsilon)\subset\mathcal{B}_{d\epsilon}.
$$ 
Thus Lemma~\ref{straight} applied to $f(\gamma)$ yields
$$
\mathcal{L}(f(\gamma))<4Md\epsilon,
$$
and the result follows by taking the inverse.
\end{proof}

The proof of Lemma~\ref{straight}, which we include for completeness, goes as follows: $1)$ decompose the given curve into a minimal number of monotone segments, $2)$ bound the length of each such segment, and $3)$ bound the sum of the lengths of the their projections.  The third step is carried out in Lemma~\ref{straight,2}.

\begin{lemma}\label{straight,2}
Let $\{I_i = [a_i, b_i]\}_{1\le i \le N}\subseteq [0,L]$ be a collection of intervals and  $M\in\mathds{Z^+}$. If for all $x \in [0,L]$, $x \in I_i$ for at most M choices of $i$, then  $\sum\limits_{i=1}^N  \mathcal{L}(I_i)\leq{M}L$. 
\end{lemma}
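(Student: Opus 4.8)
The plan is to rewrite both sides of the desired inequality as integrals over $[0,L]$ and then compare the integrands pointwise. For $x\in[0,L]$ let $\mu(x)=\#\{i:\ x\in I_i\}$ denote the covering multiplicity; the hypothesis says precisely that $\mu(x)\le M$ for every $x\in[0,L]$. Since every $I_i$ is contained in $[0,L]$, its length can be written as $\mathcal{L}(I_i)=\int_0^L \mathbf{1}_{I_i}(x)\,dx$, where $\mathbf{1}_{I_i}$ is the indicator function of $I_i$.

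Summing this identity over the finitely many indices $i=1,\dots,N$ and interchanging the finite sum with the integral (no hypotheses beyond $N<\infty$ are required for this) gives
$$
\sum_{i=1}^N \mathcal{L}(I_i)=\int_0^L \sum_{i=1}^N \mathbf{1}_{I_i}(x)\,dx=\int_0^L \mu(x)\,dx.
$$
Applying the pointwise bound $\mu(x)\le M$ then yields $\int_0^L \mu(x)\,dx\le \int_0^L M\,dx=ML$, which is the claim.

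There is essentially no obstacle here; the only points meriting a word of care are that the containment $I_i\subseteq[0,L]$ is what makes the indicator functions supported inside the domain of integration, and that $\mu=\sum_i \mathbf{1}_{I_i}$ is a nonnegative step function, so every integral manipulation above is elementary. Degenerate intervals with $a_i=b_i$ contribute $0$ to both sides and may simply be discarded. If one prefers to avoid integration altogether, the identical computation can be carried out combinatorially: order the $2N$ endpoints as $0\le s_0<s_1<\dots<s_k\le L$, observe that on each open subinterval $(s_{j-1},s_j)$ the number of $I_i$ covering it is a constant $c_j\le M$, and conclude $\sum_i \mathcal{L}(I_i)=\sum_{j} c_j\,(s_j-s_{j-1})\le M\sum_{j}(s_j-s_{j-1})\le ML$.
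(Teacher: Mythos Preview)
Your proof is correct and is essentially the same as the paper's: the paper partitions $[0,L]$ by the endpoints $\{a_i,b_i\}$ into subintervals $J_j$ and uses $\sum_i\mathcal{L}(I_i)=\sum_j\sum_i\mathcal{L}(I_i\cap J_j)\le \sum_j M\mathcal{L}(J_j)=ML$, which is exactly your combinatorial alternative at the end. Your integral formulation via $\mu(x)=\sum_i\mathbf{1}_{I_i}(x)$ is simply the continuous phrasing of the same multiplicity count.
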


\begin{proof}
Let $U= \bigcup\limits_{i=1}^{N} \{a_i, b_i\}$ be the set of endpoints. The elements of $U$ partition $[0,L]$ into at most $K\leq{2N+1}$ intervals $J_1, \dots, J_K$. Then
\begin{eqnarray}
\sum\limits_{i=1}^N \mathcal{L}(I_i) &=& \sum\limits_{i=1}^N \sum\limits_{j=1}^K \mathcal{L}(I_i \cap J_j) 
   = \sum\limits_{j=1}^K \sum\limits_{i=1}^N \mathcal{L}(I_i \cap J_j)  \nonumber\\   
   &\leq& \sum\limits_{j=1}^K M\mathcal{L}(J_j)
   = ML\nonumber
\end{eqnarray}
where the inequality follows from the fact that $x \in I_i$ for at most $M$ different choices of $i$. 
\end{proof}

We are now ready to complete the proof of Lemma~\ref{straight}:

\begin{proof}[Proof of Lemma~\ref{straight}] Let $\{u_\lambda\}$ be a minimal partition of $\gamma\cap B$ into intervals such that ${u_\lambda}$ is the graph of a monotone function for each $\lambda$. 

For a given $\lambda$ let $(a_{\lambda_1},b_{\lambda_1})$ and $(a_{\lambda_2},b_{\lambda_2})$ be the endpoints of ${u_\lambda}$ and w.l.o.g assume that $a_{\lambda_2}>a_{\lambda_1}$ and $b_{\lambda_2}>b_{\lambda_1}$ so that $u_\lambda$ is the graph of an increasing function.  Then
\begin{eqnarray}
    \mathcal{L}(u_\lambda) &=& \int_{a_{\lambda_1}}^{a_{\lambda_2}}\sqrt{1 + u_\lambda'(x)^2}\,dx  \nonumber \\
  &\leq& \int_{a_{\lambda_1}}^{a_{\lambda_2}} 1+u_\lambda'(x)\,dx  \nonumber \nonumber\\
  &\leq& |a_{\lambda_2}-a_{\lambda_1}|+|b_{\lambda_2}-b_{\lambda_1}|.\nonumber
\end{eqnarray}

Then by Lemma~\ref{straight,2}
$$
\sum_{\lambda} |a_{\lambda_2}-a_{\lambda_1}| \leq{2M\epsilon} \; \; \; \mbox{and} \; \; \; \sum_\lambda |b_{\lambda_2}-b_{\lambda_1}| \leq{2M\epsilon},
$$
and hence
$$
    \mathcal{L}(\gamma\cap B) = \sum_{\lambda}\mathcal{L}(u_{_\lambda}) \nonumber
    \leq  4 M\epsilon.\nonumber
$$
\end{proof}


\section{Proof of Theorem~\ref{TSC}}\label{sec_locallyfinite}

In this section we prove Theorem~\ref{TSC}. There are two cases depending on whether or not $x\in V$.  The $x\in V$ case is rather simple since the conclusion of Theorem~\ref{TSC} holds at $t=0$ and it is enough to count the number of intersections with static lines.  In the second case, we instead count the number of intersections with the so-called grim reaper, a translating solution to curve shortening flow.

\begin{defn} [Grim reaper]\label{grimreaper} For any $\lambda, c>0$ the translating graph
$$
u^\lambda(x,t)=-\frac{1}{c}\ln(\sec(cx))+3+\lambda-ct,\:\:\:\:\:\: \frac{-\pi}{2c}<x<\frac{\pi}{2c}
$$
is a solution to curve shortening flow.
\end{defn}

As exact solutions are hard to produce the grim reapers have been used extensively in barrier arguments in curve shortening flow.  Bu they also appear naturally.  While the shrinking circle is the only singularity for embedded curves, grim reapers do occur as a blow-up in the immersed case~\cite{Ang2}.

The vertical shift of $3+\lambda$ in Definition~\ref{grimreaper} guarantees that $u^{\lambda}(\cdot,0) \cap V=\emptyset$.  See Figure 2.  The arguments of this section rely on the fact that regardless of how large we choose $c$ to be, i.e. how thin the grim reaper is, the number of intersections with $T$ is bounded.  Let $\Gamma$ be the graph of $\sin(1/x)|_{(0,1]}$.  We then have the following: 


\begin{lemma}\label{intersect}
If $c>1$ and $0\leq\lambda\leq 6$ then $|u^{\lambda}(\cdot,0) \cap\Gamma| = 1$.
\end{lemma}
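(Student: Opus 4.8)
The plan is to show that the grim reaper graph $u^\lambda(\cdot,0)$, which is concave and has its maximum at $x=0$ with value $3+\lambda-\tfrac1c\ln 1 = 3+\lambda$, meets the curve $\Gamma=\{(x,\sin(1/x)):x\in(0,1]\}$ in exactly one point. Since $\Gamma$ is confined to the horizontal strip $\{|y|\le 1\}$ and to the vertical strip $0<x\le 1$, while at $x=0$ the grim reaper sits at height $3+\lambda\ge 3$, the whole analysis takes place on the overlap of domains, namely $x\in(0,1]$ (and only where $u^\lambda>-1$, but in fact the vertical extent is what we exploit). First I would record the elementary shape facts about the grim reaper restricted to $x\in(0,1]$: it is smooth, strictly concave, strictly decreasing on $(0,\tfrac{\pi}{2c})$, with $u^\lambda(x,0)\to-\infty$ as $x\uparrow\tfrac{\pi}{2c}$. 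Since $c>1$ we have $\tfrac{\pi}{2c}<\tfrac{\pi}{2}<1$, so in fact the grim reaper's domain ends before $x=1$; I would note that $u^\lambda(\tfrac{\pi}{2c},0)=-\infty$, so somewhere in $(0,\tfrac{\pi}{2c})$ the graph crosses every horizontal level below $3+\lambda$, in particular the band $[-1,1]$ that contains $\Gamma$.

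The heart of the argument is a monotonicity/convexity comparison. Define $g(x)=u^\lambda(x,0)-\sin(1/x)$ on the interval where both are defined, i.e. $x\in(0,\tfrac{\pi}{2c})$. I want to show $g$ has exactly one zero. The key step is that on the subinterval where $u^\lambda(x,0)\in[-1,1]$ — which is a single subinterval $[x_-,x_+]\subset(0,\tfrac{\pi}{2c})$ because $u^\lambda$ is strictly decreasing — the function $u^\lambda(\cdot,0)$ is essentially flat compared to the wild oscillation of $\sin(1/x)$, OR, more robustly, I would argue directly with concavity: $u^\lambda(\cdot,0)$ is concave, and I would like to compare its slope with the slope of $\sin(1/x)$. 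That does not immediately work because $\sin(1/x)$ oscillates. The cleaner route: show that the relevant part of the grim reaper lies at $x$-values bounded below away from $0$, where $\sin(1/x)$ is well-behaved. Precisely, since $u^\lambda(x,0)\le 1$ forces $-\tfrac1c\ln(\sec(cx))\le 1-3-\lambda=-2-\lambda\le -2$, i.e. $\ln(\sec(cx))\ge 2c$, i.e. $\sec(cx)\ge e^{2c}$, i.e. $\cos(cx)\le e^{-2c}$, so $cx$ is within $\arccos(e^{-2c})$ of $\tfrac\pi2$; since $c>1$ this pins $x$ into a tiny window just below $\tfrac{\pi}{2c}$. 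On that window $1/x$ ranges over a short interval near $2c/\pi$ (which, for $c>1$, is $>2/\pi>0.6$, and in any case $1/x<2/\pi\cdot$something bounded), so $\sin(1/x)$ is monotone there (its derivative is $-x^{-2}\cos(1/x)$, and $\cos(1/x)$ doesn't vanish on such a short high-$x$ window). Hence on $[x_-,x_+]$ both $u^\lambda(\cdot,0)$ and $\sin(1/x)$ are $C^1$, with $u^\lambda$ strictly decreasing and $\sin(1/x)$ monotone; to get a unique crossing I would then compare which is "steeper": as $x\uparrow \tfrac\pi{2c}$, $u^\lambda(x,0)\to-\infty$ while $\sin(1/x)$ stays bounded, and at $x=x_-$ we have $u^\lambda(x_-,0)=1\ge\sin(1/x_-)$. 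So $g(x_-)\ge 0$ and $g\to-\infty$, giving at least one zero; strict monotonicity of $g$ near the end (since $u^\lambda$ has slope $\to-\infty$ and $\sin(1/x)$ has bounded slope on the window) would be used to argue the zero is forced, and that outside $[x_-,x_+]$ there can be no zero because there $u^\lambda>1\ge\sin(1/x)$ or $u^\lambda<-1\le\sin(1/x)$ — wait, the latter has the wrong sign, so below $x_+$ one has $u^\lambda<-1\le\sin(1/x)$, fine, no zero; and we must handle $x\in(x_+,\tfrac\pi{2c})$ where $u^\lambda\in(-\infty,-1)$ and $\sin(1/x)\ge-1$, so $g<0$, no zero. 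Above the band, $x\in(0,x_-)$, $u^\lambda>1\ge\sin(1/x)$, so $g>0$, no zero. Therefore the unique zero lies in $[x_-,x_+]$ and the $0\le\lambda\le 6$ range is used only to keep $3+\lambda\in[3,9]$ so the maximum clears the band $[-1,1]$ comfortably while the estimates above stay uniform.

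The main obstacle I anticipate is the step controlling $\sin(1/x)$ on the window where $u^\lambda(\cdot,0)\in[-1,1]$: one must verify that this window, pushed up toward $x=\tfrac\pi{2c}$, is short enough that $1/x$ doesn't traverse a full period of sine, uniformly over $c>1$. The computation $\cos(cx)\le e^{-2c}$ shows $x\in(\tfrac\pi{2c}-\delta_c,\tfrac\pi{2c})$ with $\delta_c=\tfrac1c\arccos(e^{-2c})$, which is $O(1/c)$, so $1/x$ varies by $O(1)$ — one needs to check this $O(1)$ is genuinely less than $\pi$ (a period), and that $\cos(1/x)$ keeps a fixed sign there. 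For small $c$ near $1$ this is the delicate case and I would just carry out the explicit bound; the earlier hypothesis $c>1$ is presumably chosen precisely to make this clean (for the actual application in Theorem~\ref{TSC} one takes $c$ large, where everything is easy). An alternative, possibly slicker, approach that avoids the window analysis entirely: use that $\Gamma$ together with the vertical segment $V$ bounds the region $\Omega$, observe that $u^\lambda(\cdot,0)$ is disjoint from $V$ and lies above $\Omega$, and invoke an intersection-number or maximum-principle argument — but since the lemma is stated as a clean equality "$=1$" I expect the authors do the direct geometric count, so that is the route I would follow, with the window estimate as the one genuinely technical lemma.
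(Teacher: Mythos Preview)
Your framework matches the paper's: reduce to the window $[x_-,x_+]$ where $u^\lambda(\cdot,0)\in[-1,1]$, note there are no intersections outside it, and show exactly one zero of $g(x)=u^\lambda(x,0)-\sin(1/x)$ inside. The gap is in the uniqueness step on that window. You try to first show $\sin(1/x)$ is monotone there; but even if it were, two monotone functions can cross several times, so monotonicity of $\sin(1/x)$ by itself buys nothing. You then say you would ``compare which is steeper,'' but you only invoke steepness of $u^\lambda$ near the endpoint $x\uparrow\pi/(2c)$, which gives $g'<0$ only near that endpoint, not on all of $[x_-,x_+]$. That does not rule out multiple zeros in the interior of the window. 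Your ``main obstacle'' (controlling the window so that $1/x$ doesn't sweep a full period) is thus a red herring: it is neither sufficient nor necessary.

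The paper's fix is exactly the slope comparison you gesture at, done uniformly. With $x_1=x_-$ and $x_2=x_+$ solved explicitly as $x_i=\tfrac{1}{c}\arccos(e^{(-2i-\lambda)c})$, one computes
\[
\min_{[x_1,x_2]}|u'|=|u'(x_1)|=\tan\bigl(\arccos(e^{(-2-\lambda)c})\bigr)>\tfrac12 e^{2c},
\qquad
\max_{[x_1,x_2]}|\Gamma'|\le \frac{1}{x_1^2}<\frac{16}{\pi^2}c^2,
\]
both inequalities using $c>1$. Since $\tfrac12 e^{2c}>\tfrac{16}{\pi^2}c^2$ for $c>1$, one gets $g'(x)=u'(x)-\Gamma'(x)<0$ on the entire window (because $u'<0$ and $|u'|>|\Gamma'|$), hence $g$ is strictly decreasing there, and the single sign change $g(x_1)\ge 0>g(x_2)$ forces exactly one zero. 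No information about the monotonicity or period of $\sin(1/x)$ is needed.
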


\begin{proof} Let $0<x_1<x_2$ be such that $(x_1, 1)$ and $(x_2,-1)$ lie on the graph of $u=u^{\lambda}(\cdot,0)$.  Then all intersections occur on the interval $[x_1,x_2]$.  Setting $u=\pm1$ and solving explicitly we obtain
$$
x_1 = \frac{\arccos(e^{(-2-\lambda)c})}{c} \; , \; x_2 = \frac{\arccos(e^{(-4-\lambda)c})}{c}.
$$

Now,
$$\min_{x \in [x_1, x_2]} |u'(x)| = |u'(x_1)| = \tan(\arccos(e^{(-2-\lambda)c})) = \frac{\sqrt{1 - e^{(-4-\lambda)c}}}{e^{(-2-\lambda)c}} > \frac{1}{2} e^{2c},
$$
and
$$\max_{x \in [x_1, x_2]} |\Gamma'|=\max_{x \in [x_1, x_2]}\frac{|\cos(1/{x})|}{{x}^2}\leq \frac{1}{{x_1}^2} = \Big(\frac{c}{\arccos(e^{(-2-\lambda)c})}\Big)^2 < \frac{16}{\pi^2}c^2.
$$
In both cases the final inequality uses the fact $c>1$.  Thus
$$
\min_{x \in [x_1, x_2]}|u'| > \max_{x \in [x_1, x_2]}|\Gamma'|
$$ 
which proves the result.\end{proof}


We now turn to the proof of Theorem~\ref{TSC}.

\begin{proof}[Proof of Theorem~\ref{TSC}] Case 1: Let $x\in\mathds{R}^2\setminus V$, and let $\ell_1\neq \ell_2$ be distinct lines through $x$ such that $\ell_i \cap V = \emptyset$ for $i=1,2$.  Let 
$$
\epsilon = \frac{\mathrm{d}(\ell_1 \cup \ell_2, V)}{2},
$$
and let $\theta=\angle(\ell_1,\ell_2)$. Note that $\epsilon$ and $\theta$ are necessarily small when $x$ is close to $V$. 

For each $i = 1,2$ we define a family of lines as follows: let $L_i$ be the largest family of lines parallel to $\ell_i$ which intersect $B_\epsilon (x)$ non-trivially.  Since the definition of $\epsilon$ implies that each line in $L_i$ is disjoint from $V$ there exists a constant $C>0$ such that each line intersects $T$ at most $C$ times. 

Lemma~\ref{intbound} implies that $|\gamma_n\cap\zeta|\leq 2C$ for each line $\zeta\in L_i$.  Hence for each $t>0$ we have $|(\gamma_n)_t\cap\zeta|\leq 2C$ since $\zeta$ is a static solution to curve shortening flow and the number of intersections is non-increasing in time.

By translating  and rotating we may assume that $x=0$ and that the lines in $L_1$ are parallel to the $x$-axis.  Then the linear map 
$$
A:(x,y)\mapsto (x\tan\theta - y,y)
$$
takes each line in $L_2$ to a vertical line and sends each horizontal line onto itself.  A simple calculation yields
$$
\|A\|_2,\|A^{-1}\|_2\leq 2\max\{\tan\theta,(\tan\theta)^{-1}\}=:d(\theta)
$$
and hence by Theorem~\ref{jacobian} we conclude that
$$
\mathcal{L}((\gamma_n)_t\cap B_\epsilon(x))<4Cd(\theta)^2\epsilon,
$$
which proves the result.  Notice that in this case the constant on the right-hand side is independent of time.


Case 2: Now suppose $x\in V$.   Lemma~\ref{intersect} and the choice of $\gamma_n$ implies that there exists a constant $C>0$ such that for each $0\leq\lambda\leq 6$ we have 
$$
|\gamma_n\cap u^\lambda|\leq C.
$$
For the topologist's sine curve in Figure 1 we have $C=4$, but we allow for the possibility that the arc added to join the $\sin(1/x)$ portion back to the origin introduces a higher number of intersections.

Given $t_0 > 0$ define $c  = \frac{6}{t_0}$ so that the grim reaper defined by 
$$
u(x,t) = -\frac{1}{c}\ln(\sec(cx)) + 3 - ct 
$$
passes completely through $V$ by $t_0$.  

Now fix $\alpha\ll 1$ and choose $\epsilon > 0$ such that $|u'(x)| < \alpha$ for all $x \in [-\epsilon, \epsilon]$. Restricting the family of curves~$u^\lambda_{t_0}$ to $B_\epsilon(x)$ we obtain a foliation by vertical translates. 
Using Lemma~\ref{intersect} and the fact that the number of intersections does increase under curve shortening flow we have for each $0\leq\lambda\leq 6$
$$
|(\gamma_n)_{t_0}\cap u^\lambda_{t_0}|\leq C.
$$
Moreover, by Definition~\ref{goodapprox}, $\gamma_n$, and hence $(\gamma_n)_{t_0}$, intersect each vertical line at most $\omega$ times. 

The map $f:(x,y)\mapsto(x,y-u(x))$ sends each vertical line onto itself, each curve $u^\lambda_{t_0}$ to a horizontal line, and since $u'(x)$ is small $f$ satisfies the hypothesis of Theorem~\ref{jacobian} with $d=2$.  Thus 
$$
\mathcal{L}((\gamma_n)_{t_0}\cap B_\epsilon(x))<4\max\{C,\omega\}\epsilon.
$$
\end{proof}



\section{Finite $\mathcal{H}^1$-measure at positive times}\label{sec_locallyconnected}

We begin this section by reviewing the definition of the Hausdorff 1-measure.

\begin{defn} [$\mathcal{H}^1$] Let $K$ be compact.  For any $\delta>0$ we first define
$$
\mathcal{H}^1_\delta(K) =\inf\left\{\sum_n\mathrm{diam}(U_n)\mid U_n\:\mathrm{open}, K\subset\bigcup_n U_n,\:\: \mathrm{diam}(U_n)<\delta\right\}.
$$
As $\delta\to 0$ the quantity above is monotonic and hence the limit exists, although it may be infinite.
$$
\mathcal{H}^1(K)=\lim_{\delta\to 0^+}\mathcal{H}^1_\delta(K).
$$
\end{defn}

In general the convergence properties of the Hausdorff measure under Hausdorff convergence are quite poor.  Indeed, any compact set is the limit of finite sets of points and hence the $\mathcal{H}^1$-measure may jump in the limit.  Nevertheless, in the present situation we do obtain finite $\mathcal{H}^1$-measure of the limit. 

\begin{lemma} Let $\Omega\subset\mathds{R}^2$ be a bounded simply-connected domain and suppose that $\{E_i\}$ is a sequence of smooth disks satisfying $E_1\subset E_2\subset E_3\subset\ldots$ and $\cup E_n=
\Omega$.  If $$\sup_n\{\mathcal{H}^1(\partial E_i)\}<\infty$$ then $\mathcal{H}^1(\partial \Omega)<\infty$.
\end{lemma}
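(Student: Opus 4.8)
The plan is to use the increasing sequence of smooth disks $\{E_i\}$ to extract the boundary $\partial\Omega$ as a Hausdorff limit and then bound its $\mathcal{H}^1$-measure from below by the uniform bound on $\mathcal{H}^1(\partial E_i)$. The key idea is that although $\mathcal{H}^1$ is only lower semicontinuous under Hausdorff convergence in one direction that is useless here (finite sets limit onto anything), the \emph{monotonicity} of the disks forces a favorable inequality: the boundaries $\partial E_i$ cannot ``disappear'' in the interior because $\cup E_i = \Omega$, so any piece of $\partial\Omega$ is approximated from inside by pieces of $\partial E_i$ of at least the same length. Concretely, I would first observe that since $E_1\subset E_2\subset\cdots$ with $\cup E_i=\Omega$, for any compact $W\subset\Omega$ we have $W\subset E_i$ for $i$ large, and $d_H(\overline{E_i},\overline\Omega)\to 0$, hence $d_H(\partial E_i,\partial\Omega)\to 0$ as well (using that $\Omega$ is a bounded domain, so $\partial\Omega=\overline\Omega\setminus\Omega$ and points of $\partial\Omega$ are approached by points of $E_i$).

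Next I would set $L:=\sup_i\mathcal{H}^1(\partial E_i)<\infty$ and argue that $\mathcal{H}^1(\partial\Omega)\le L$. The cleanest route is via Golab's semicontinuity theorem for $\mathcal{H}^1$ on \emph{connected} sets (continua): if $C_i$ are continua converging in the Hausdorff metric to a continuum $C$, then $\mathcal{H}^1(C)\le\liminf_i\mathcal{H}^1(C_i)$. Since each $E_i$ is a smooth disk, $\partial E_i$ is a smooth Jordan curve, in particular a continuum; and $\partial\Omega$ is compact and connected (as $\Omega$ is bounded and simply connected, its boundary is connected — this is where simple-connectedness of $\Omega$ is used). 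Passing to a subsequence so that $\mathcal{H}^1(\partial E_i)\to\liminf$, Golab's theorem gives $\mathcal{H}^1(\partial\Omega)\le\liminf_i\mathcal{H}^1(\partial E_i)\le L<\infty$, which is exactly the claim. If one prefers to avoid quoting Golab directly, the same conclusion follows from a covering argument: fix $\delta>0$, cover $\partial\Omega$ by finitely many open sets $U_1,\dots,U_m$ of diameter $<\delta$; for $i$ large each $\partial E_i$ lies in $\bigcup U_k$ (by Hausdorff convergence) and, crucially, $\partial E_i$ must traverse a definite length inside the union because it separates $E_1$ from the complement of $\Omega$ — but making this lower bound rigorous is essentially reproving Golab, so I would just cite it.

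I expect the main obstacle to be precisely the direction of the inequality: Hausdorff convergence gives $\mathcal{H}^1$ lower semicontinuity only for connected sets, so the crux is to justify that $\partial\Omega$ is a continuum and to invoke Golab correctly. The hypothesis that $\Omega$ is simply connected is what guarantees $\partial\Omega$ is connected (a bounded simply-connected planar domain has connected boundary); without it the lemma would fail, as $\partial\Omega$ could be a disjoint union with the $E_i$-boundaries pinching off. A secondary, more technical point is verifying $d_H(\partial E_i,\partial\Omega)\to 0$ from $d_H(\overline{E_i},\overline\Omega)\to 0$ together with $E_i$ open and increasing — this is routine but needs the remark that no point of $\partial\Omega$ lies in the interior of any $E_i$, so $\partial\Omega$ is approached only by boundary points of $E_i$, while conversely $\partial E_i\subset\overline\Omega$ gets close to $\partial\Omega$ since the $E_i$ exhaust $\Omega$. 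Once these two facts are in place the proof is a one-line application of Golab's theorem.
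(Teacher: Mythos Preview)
Your argument via Golab's lower-semicontinuity theorem is correct and in fact yields the sharp bound $\mathcal{H}^1(\partial\Omega)\le L$, but it is a genuinely different route from the paper's. The paper gives an elementary, self-contained covering argument: for each $\epsilon>0$ one places points $x_1,x_2,\ldots$ along $\gamma_n=\partial E_n$ with $|x_i-x_{i+1}|=\epsilon/2$ and the intervening arc contained in $B_{\epsilon/2}(x_i)$; since arc length dominates chord length there are at most $\lceil 2L/\epsilon\rceil$ such points, and the balls $B_\epsilon(x_i)$ cover $B_{\epsilon/2}(\gamma_n)\supset\partial\Omega$ for $n$ large, giving $\mathcal{H}^1_{2\epsilon}(\partial\Omega)\le 2\epsilon\lceil 2L/\epsilon\rceil<4L+1$ directly. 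The paper's proof needs only the one-sided inclusion $\partial\Omega\subset B_{\epsilon/2}(\gamma_n)$ and no connectedness of $\partial\Omega$, whereas yours is shorter but imports a nontrivial theorem. Two minor remarks on your write-up: the connectedness of $\partial\Omega$ you attribute to simple-connectedness is automatic here, since Hausdorff limits of continua are continua (and an increasing union of topological disks is necessarily simply connected, so that hypothesis is redundant in any case); and your sketched ``alternative covering argument'' has the direction reversed---one should cover $\gamma_n$ efficiently and observe that $\partial\Omega$ lies in that cover, not cover $\partial\Omega$ and trap $\gamma_n$---which, once corrected, is exactly the paper's proof.
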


\begin{proof} Let $\gamma_n=\partial E_n$, $L=\sup_n\{\mathcal{H}^1(\gamma_n)\}$ and fix $n\in\mathds{N}$.  Given $\epsilon>0$, let $x_1,x_2,\ldots$ be an ordered sequence of points along $\gamma_n$ such that 
\begin{enumerate}
\item $d(x_i,x_{i+1})=\frac{\epsilon}{2}$, and
\item the open arc connecting $x_i$ to $x_{i+1}$ is contained in $B_{\frac{\epsilon}{2}}(x_i)$.
\end{enumerate}

The first condition implies that the number of points $x_i$ is at most $\left\lceil{\frac{2L}{\epsilon}}\right\rceil$ and the second guarantees that 
$$
B_{\frac{\epsilon}{2}}(\gamma_n)\subset\bigcup_i B_{\epsilon}(x_i).
$$
Now, if $n$ is sufficiently large then $\partial\Omega\subset B_{\frac{\epsilon}{2}}(\gamma_n)$.  Thus
$$
\mathcal{H}^1_\epsilon(\partial\Omega)\leq 2\epsilon\left\lceil{\frac{2L}{\epsilon}}\right\rceil<4L+1
$$
and the result follows by taking $\epsilon\to 0$.
\end{proof}

\section{Proof of Theorem~\ref{main}}\label{sec_proof}

We now complete the proof of Theorem~\ref{main}:

\begin{proof} Let $\gamma_n$ be a sequence of approximations satisfying Definition~\ref{goodapprox} and let $K_t$ be the Hausdorff limit of the sequence $(\gamma_n)_t$.  Then $K_t\subset\partial(T_t)$.  Theorem~\ref{TSC} and Theorem~\ref{measure} imply that $\mathcal{H}^1(K_t)<\infty$ and hence that $K_t$ is locally-connected~\cite{F}.  Theorem~\ref{Lauer} then implies that $K_t$ is a smooth curve shortening flow for $t>0$.

If one chooses instead an approximating sequence in $\mathds{R}^2\setminus\overline{\Omega}$ then the same conclusion holds. Thus for positive times $\partial(T_t)$ is the union of two smooth curves, but Lemma~\ref{measure} implies that $T_t$ has measure zero and hence that the two curves coincide.  This completes the proof.
\end{proof}


Department of Mathematics, M.I.T., Cambridge, MA, USA, 02139\\
\phantom{ttt} Email address: caseylam@mit.edu

\bigskip

Fachbereich Mathematik und Informatik, Freie Universit$\ddot{a}$t, Berlin, Germany, 14195\\
\phantom{ttt} Email address: lauer@zedat.fu-berlin.de

\end{document}